\documentclass[11pt]{amsart}

\usepackage[T1]{fontenc}
\usepackage[utf8]{inputenc}
\usepackage{lmodern}
\usepackage{amsmath,amssymb,amsthm,mathtools}
\usepackage{geometry}
\usepackage{hyperref}

\newtheorem{theorem}{Theorem}[section]
\newtheorem{proposition}[theorem]{Proposition}
\newtheorem{corollary}[theorem]{Corollary}
\newtheorem{lemma}[theorem]{Lemma}
\theoremstyle{definition}
\newtheorem{definition}[theorem]{Definition}
\theoremstyle{remark}
\newtheorem{remark}[theorem]{Remark}

\newcommand{\N}{\mathbb N}
\newcommand{\K}{\mathbb K}
\newcommand{\Lcal}{\mathcal L}
\newcommand{\as}{\mathrm{as}}
\newcommand{\doi}[1]{\href{https://doi.org/#1}{doi:#1}}

\title[Optimality in a Multilinear Kwapie\'n Theorem]
{Optimality in a Multilinear Extension of Kwapie\'n's Theorem}

\author[L.~M.~Casta\~no Mar\'in]{Luis Miguel Casta\~no Mar\'in}
\address{Departamento de Matem\'aticas\\
Universidad Nacional de Colombia\\
Bogot\'a, Colombia}
\email{lucastanom@unal.edu.co}

\author[D.~N\'u\~nez-Alarc\'on]{Daniel N\'u\~nez-Alarc\'on}
\address{Departamento de Matem\'aticas\\
Universidad Nacional de Colombia\\
Bogot\'a, Colombia}
\email{dnuneza@unal.edu.co}

\author[G.~Ribeiro]{Geivison Ribeiro}
\address{Department of Mathematics\\
Universidade Federal de Sergipe\\
Campus Professor Alberto Carvalho, Av. Vereador Olímpio Grande, s/n,
49506-036 Itabaiana, SE, Brazil}
\email{geivison@mat.ufs.br}

\author[D.~Serrano-Rodr\'iguez]{Diana Serrano-Rodr\'iguez}
\address{Departamento de Matem\'aticas\\
Universidad Nacional de Colombia\\
Bogot\'a, Colombia}
\email{diserranor@unal.edu.co}

\subjclass[2020]{46A35, 46A45, 46B28, 46G25}
\keywords{Kwapie\'n's theorem, absolutely summing multilinear operators,
Walsh characters, finite convolution, cotype, finite representability}

\begin{document}

\begin{abstract}
Bayart, Pellegrino and Rueda proved that every continuous \(m\)-linear
operator from \((\ell_1)^m\) into \(\ell_p\) is absolutely
\((r_{m,p};1)\)-summing, for explicit exponents \(r_{m,p}\). The
optimality of these exponents was known for \(2\le p\le\infty\). We prove
optimality in the remaining range \(1\le p<2\). Our argument is
finite-dimensional and is based on convolution on \(\mathbb F_2^d\) and
the Walsh character system. The same construction yields a local
obstruction theorem for range spaces containing \(\ell_p^n\) uniformly.
As applications, we recover the sharp exponent \(2/m\) for
\(L_1[0,1]\)-valued mappings and show that cotype alone does not determine
the optimal absolute \((r;1)\)-coincidence exponent.
\end{abstract}

\maketitle

\section{Introduction}

The theory of absolutely summing operators is a central part of Banach
space theory; standard references include \cite{DJT,Pietsch,Pisier}.
Grothendieck's theorem asserts that every bounded linear operator from
\(\ell_1\) into \(\ell_2\) is absolutely \((1,1)\)-summing. Kwapie\'n
extended this result to \(\ell_p\): every continuous linear operator
\[
T:\ell_1\longrightarrow\ell_p
\]
is absolutely \((r,1)\)-summing whenever
\begin{equation}\label{eq:Kwapien}
\frac1r=1-\left|\frac1p-\frac12\right|,
\end{equation}
and this exponent is optimal; see \cite{Kwapien,Bennett}.

For the diagonal version of multilinear summability, Bayart, Pellegrino
and Rueda proved the following extension of Kwapie\'n's theorem.

\begin{theorem}[Bayart--Pellegrino--Rueda \cite{BPR}]
\label{thm:BPR}
Let \(m\in\N\) and \(1\le p\le\infty\). Then
\[
\Lcal({}^m\ell_1;\ell_p)
=
\Lcal_{\as(r_{m,p};1)}({}^m\ell_1;\ell_p),
\]
where
\begin{equation}\label{eq:BPR-exponents}
r_{m,p}=
\begin{cases}
\displaystyle\frac{2p}{mp+2p-2},&1\le p\le2,\\[8pt]
\displaystyle\frac{2p}{mp+2},&2\le p\le\infty.
\end{cases}
\end{equation}
At \(p=\infty\), the second expression is understood as
\(r_{m,\infty}=2/m\).
\end{theorem}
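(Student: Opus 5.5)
This is Theorem \ref{thm:BPR}, which the paper quotes from \cite{BPR}. Still, here is how I would prove it. Throughout, the statement is that for every $T\in\Lcal({}^m\ell_1;\ell_p)$ there is $C$ with
\[
\Bigl(\sum_j \|T(x^{(1)}_j,\dots,x^{(m)}_j)\|^{r}\Bigr)^{1/r}\le C\|T\|\prod_{k}\sup_{\|\varphi\|\le1}\sum_j|\varphi(x^{(k)}_j)|
\]
with $r=r_{m,p}$. The plan is an induction on $m$, with Kwapie\'n's linear theorem \eqref{eq:Kwapien} as the base case $m=1$; one checks that $r_{1,p}$ agrees with \eqref{eq:Kwapien}.

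\emph{Reduction.} Weakly $1$-summable sequences $(x_j)$ in $\ell_1$ correspond to operators $u:c_0\to\ell_1$, $u(e_j)=x_j$, with $\|u\|$ equal to the weak $\ell_1$ norm. Hence the sum to be estimated is
\[
\sum_j \|S(e_j,\dots,e_j)\|^r,\qquad S=T\circ(u_1,\dots,u_m):c_0^m\to\ell_p .
\]
So it suffices to control the diagonal of an arbitrary $m$-linear map $c_0\times\cdots\times c_0\to\ell_p$. By Grothendieck's theorem, each $u_k:c_0\to\ell_1$ is $2$-summing, so it factors through a Hilbert space (indeed $c_0\to\ell_2\to\ell_1$ via a diagonal-type factorization). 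This converts the problem into one about $\ell_p$-valued $m$-linear maps on $c_0$ with Hilbertian input structure.

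\emph{Inductive step.} Fix the last $m-1$ variables. Using the linear Kwapie\'n estimate in the first variable and the inductive hypothesis in the remaining ones, combine the two inequalities by a mixed-norm H\"older (Minkowski) inequality, in the spirit of the Bohnenblust--Hille interpolative method. The exponents should add like
\[
\frac1{r_{m,p}}=\frac1{r_{m-1,p}}+c_p ,
\]
with the gain per variable being $c_p=\tfrac12$ for $p\ge2$ (cotype $p$ of the range combined with the Grothendieck $\ell_2$ gain) and $c_p=\tfrac1p$... Checking the formula gives $1/r=m/2+1-1/p$ for $p\le2$ and $1/r=m/2+1/p$ for $p\ge2$. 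In both cases the gain per variable is exactly $\tfrac12$, and the base case contributes $1-|1/p-1/2|$. So the step to prove is: each additional $\ell_1$ variable improves $1/r$ by $\tfrac12$.

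\emph{Main obstacle.} The real difficulty is obtaining this $\tfrac12$ gain per variable without losing anything to the $\ell_p$ geometry. To do this I would apply the Defant--Popa--Schwarting inequality, or a mixed $(\ell_1,\ell_2)$ Blei-type inequality, to the scalar $m$-linear forms $\varphi\circ S$. Concretely, I would write $S(e_{j},\dots,e_{j})$ via Rademacher/Khinchin averaging in one variable, use that $\ell_1$-valued weakly summable families are $2$-absolutely summing in each slot, and take the remaining $\ell_p$ norm with Kwapie\'n's estimate. Making the H\"older exponents close up exactly at $r_{m,p}$ is the delicate computation. I expect this part, and in particular the $p<2$ regime where $\ell_p$ has cotype $2$ but is not Hilbertian, to be the hardest.
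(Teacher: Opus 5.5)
\textbf{There is a genuine gap: the inductive step, which carries all the content, is never carried out, and the mechanism you describe for it cannot work.} (The paper gives no proof of this statement; it is quoted from Bayart--Pellegrino--Rueda, so your proposal has to stand on its own.)

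In the diagonal sum $\sum_j\|T(x^{(1)}_j,\ldots,x^{(m)}_j)\|^r$ all slots share the single index $j$. Freezing $x^{(2)}_j,\ldots,x^{(m)}_j$ therefore freezes $j$. No sequence is left in the first slot to which Kwapie\'n's inequality could be applied; that inequality concerns one operator acting on one weakly summable sequence. Likewise, the inductive hypothesis would have to be applied to the $j$-dependent maps $T(x^{(1)}_j,\cdot,\ldots,\cdot)$. The Bohnenblust--Hille/Minkowski machinery you invoke is built for multiple sums over independent indices $j_1,\ldots,j_m$, not for diagonal sums.

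If you make the frozen variables uniform, you are treating $x\mapsto T(x,\cdot,\ldots,\cdot)$ as an operator $\ell_1\to\Lcal({}^{m-1}\ell_1;\ell_p)=\ell_\infty(\N^{m-1};\ell_p)$. Such an operator can be an isometric embedding of $\ell_1$, already for $T$ with values in $\K e_1$, so in general it is only $(2,1)$-summing. Combining this with $(\sum_j\|x^{(k)}_j\|^2)^{1/2}\le K_G\|(x^{(k)}_j)_j\|_{w,1}$ in the other slots gives only Botelho's exponent $2/m$. That loses exactly the extra $1/r_{1,p}-\frac12$ (namely $1-\frac1p$ for $p\le2$ and $\frac1p$ for $p\ge2$), which is the whole point of the theorem. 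Your last paragraph concedes this computation is missing, and the appeals to Defant--Popa--Schwarting, Blei and Khinchin are not attached to any concrete inequality. What you have is correct exponent bookkeeping, $1/r_{m,p}=1/r_{1,p}+(m-1)/2$, and a correct first reduction, but not a proof.

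\textbf{How to close the gap.} Your reduction is the right start if you push it through.

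\emph{Reduction.} Pietsch's theorem for the $2$-summing map $u_k$ gives $x^{(k)}_j=\sigma_{k,j}v_ke_j$ with $\sum_j\sigma_{k,j}^2\le1$ and $\|v_k:\ell_2\to\ell_1\|\le K_G\|(x^{(k)}_j)_j\|_{w,1}$. By H\"older with $1/r=m/2+1/s$, it then suffices to show
\[
\Bigl(\sum_j\|T(v_1e_j,\ldots,v_me_j)\|_p^{s}\Bigr)^{1/s}\le C\|T\|\prod_{k=1}^m\|v_k\|,
\qquad s=p'\ (p\le2),\quad s=p\ (p\ge2).
\]

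\emph{Endpoints $p=1,\infty$.} Here $s=\infty$ and the estimate is trivial.

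\emph{The case $p=2$.} The linearization $\hat T:\ell_1(\N^m)\to\ell_2$ satisfies $\pi_2(\hat T)\le K_G\|T\|$ by Grothendieck's theorem. So it is enough that the diagonal tensors $w_j=v_1e_j\otimes\cdots\otimes v_me_j$ have weak $\ell_2$-norm at most $K_G^{m-1}\prod_k\|v_k\|$ in $\ell_1(\N^m)$. To see this, fix a norm-one $m$-linear form $\phi$ on $\ell_1$ and put $w'_j=v_2e_j\otimes\cdots\otimes v_me_j$. Define $V:\ell_1(\N^{m-1})\to\ell_2$ by $(Vw)_j=\langle\phi(v_1e_j,\cdot,\ldots,\cdot),w\rangle$; then $\|V\|\le\|v_1\|$. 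The number $\phi(v_1e_j,\ldots,v_me_j)$ is the $j$-th coordinate of $Vw'_j$. Now use $\pi_2(V)\le K_G\|V\|$ and induct on $m$. The point is that the $j$-dependence now sits in vectors whose weak norm is controlled, while a single operator acts on them.

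\emph{Intermediate $p$.} These follow by complex interpolation (after complexification if $\K=\mathbb R$) of the linear map $T\mapsto(T(v_1e_j,\ldots,v_me_j))_j$ between $(p,s)=(1,\infty)$, $(2,2)$, $(\infty,\infty)$. Indeed, $1/r_{m,p}$ is exactly the piecewise linear interpolation in $1/p$ of the values $m/2$, $(m+1)/2$, $m/2$ at $p=1,2,\infty$.
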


For \(m=1\), the exponent in \eqref{eq:BPR-exponents} agrees with
Kwapie\'n's exponent in \eqref{eq:Kwapien}. For \(m\ge2\), the absolute
summability exponent differs from the corresponding multiple summability
exponent, which is governed by the linear Kwapie\'n formula. The
optimality of the second branch of \eqref{eq:BPR-exponents}, corresponding
to \(2\le p\le\infty\), was established in \cite{NSS}. The purpose of the
present paper is to prove the missing optimality for the first branch,
namely for \(1\le p<2\).

Our argument is entirely finite-dimensional. For each \(M=2^d\), we work
on the finite group
\[
\mathbb F_2^d=(\mathbb Z/2\mathbb Z)^d
\]
and use its Walsh characters as test vectors in \(\ell_1\). Their
\(m\)-fold convolution is a scalar multiple of the same character, while
their weak \(1\)-norm is controlled by Walsh--Hadamard orthogonality.
This gives the lower estimate for \(1\le p\le2\). Composing with a
normalized Walsh--Hadamard transform yields the lower estimate for
\(2\le p\le\infty\). At \(p=2\), the two test exponents meet at the
multilinear Grothendieck exponent
\[
r_{m,2}=\frac{2}{m+1}.
\]

We also record a local consequence of the construction. If a Banach space
\(F\) contains the finite-dimensional spaces \(\ell_p^n\) uniformly, then
any absolute \((r;1)\)-coincidence theorem for maps from \((\ell_1)^m\)
into \(F\) necessarily satisfies \(r\ge r_{m,p}\). This is a local
obstruction result, rather than a classification of range spaces. Applied
to \(L_1[0,1]\), it recovers the sharpness of the universal exponent
\(2/m\) supplied by Botelho's cotype theorem.

\section{Preliminaries}

Throughout the paper, \(\K\) denotes either \(\mathbb R\) or \(\mathbb C\),
and \(E,E_1,\ldots,E_m,F\) denote Banach spaces over \(\K\). We write
\(\Lcal(E_1,\ldots,E_m;F)\) for the space of continuous \(m\)-linear
maps from \(E_1\times\cdots\times E_m\) into \(F\), equipped with its
usual operator norm. If \(E_1=\cdots=E_m=E\), we write \(\Lcal({}^mE;F)\).

For \(1\le s\le\infty\), the weak \(s\)-norm of a finite family
\((x_j)_{j=1}^N\subset E\) is
\[
\left\|(x_j)_{j=1}^N\right\|_{w,s}
=
\sup_{\varphi\in B_{E^*}}
\left(\sum_{j=1}^N|\varphi(x_j)|^s\right)^{1/s},
\]
with the usual modification when \(s=\infty\).

\begin{definition}
Let \(0<r<\infty\) and \(1\le s\le\infty\). An operator
\[
T\in\Lcal(E_1,\ldots,E_m;F)
\]
is \emph{absolutely \((r;s)\)-summing} if there is a constant \(C\ge0\)
such that
\begin{equation}\label{eq:absolute-summing}
\left(
\sum_{j=1}^N
\|T(x_j^{(1)},\ldots,x_j^{(m)})\|^r
\right)^{1/r}
\le
C\prod_{q=1}^m
\left\|(x_j^{(q)})_{j=1}^N\right\|_{w,s}
\end{equation}
for every \(N\in\N\) and all finite families
\((x_j^{(q)})_{j=1}^N\subset E_q\), \(1\le q\le m\). The class of such
maps is denoted by
\[
\Lcal_{\as(r;s)}(E_1,\ldots,E_m;F),
\]
and the least constant \(C\) in \eqref{eq:absolute-summing} is denoted by
\(\pi_{\as(r;s)}(T)\).
\end{definition}

\begin{lemma}[Uniform coincidence principle]
\label{lem:uniform-coincidence}
Let \(0<r<\infty\), \(1\le s\le\infty\), and let \(E_1,\ldots,E_m,F\)
be Banach spaces. If
\[
\Lcal(E_1,\ldots,E_m;F)
=
\Lcal_{\as(r;s)}(E_1,\ldots,E_m;F),
\]
then there is a constant \(C>0\) such that
\[
\pi_{\as(r;s)}(T)\le C\|T\|
\]
for every \(T\in\Lcal(E_1,\ldots,E_m;F)\).
\end{lemma}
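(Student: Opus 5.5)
The plan is to deduce the uniform bound from completeness by a Baire category argument. This avoids having to put a complete topology on \(\Lcal_{\as(r;s)}\) when \(r<1\), where \(\pi_{\as(r;s)}\) is only an \(r\)-norm. We work inside the Banach space \(X=\Lcal(E_1,\ldots,E_m;F)\) with the operator norm. For each \(k\in\N\), set
\[
A_k=\{T\in X:\ \pi_{\as(r;s)}(T)\le k\}.
\]
By hypothesis every \(T\in X\) is absolutely \((r;s)\)-summing, so \(X=\bigcup_k A_k\).

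\emph{Closedness.} Fix \(N\) and finite families \((x_j^{(q)})_{j=1}^N\). The map
\[
T\mapsto\Bigl(\sum_{j=1}^N\|T(x_j^{(1)},\ldots,x_j^{(m)})\|^r\Bigr)^{1/r}
\]
is continuous on \(X\), because each \(T\mapsto T(x_j^{(1)},\ldots,x_j^{(m)})\) is bounded and the sum is finite. Hence the set of \(T\) satisfying \eqref{eq:absolute-summing} with constant \(k\) for these particular families is closed. The set \(A_k\) is the intersection of these sets over all \(N\) and all families, so \(A_k\) is closed.

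\emph{Baire step.} By the Baire category theorem, some \(A_{k_0}\) contains a ball \(B(T_0,\varepsilon)\) with \(T_0\in A_{k_0}\).

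\emph{Uniform constant.} Let \(T\in X\) with \(\|T\|\le 1\). Then both \(T_0\) and \(T_0+\varepsilon T/2\) lie in \(A_{k_0}\). For \(0<r<\infty\), the quantity \(\|(y_j)\|_r=(\sum_j\|y_j\|^r)^{1/r}\) satisfies the quasi-triangle inequality
\[
\|(a_j+b_j)\|_r\le c_r\bigl(\|(a_j)\|_r+\|(b_j)\|_r\bigr),\qquad c_r=\max\{1,2^{1/r-1}\}.
\]
Apply this with \(a_j=(T_0+\varepsilon T/2)(x_j^{(1)},\ldots,x_j^{(m)})\) and \(b_j=-T_0(x_j^{(1)},\ldots,x_j^{(m)})\), so that \(a_j+b_j=\tfrac{\varepsilon}{2}T(x_j^{(1)},\ldots,x_j^{(m)})\). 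This gives
\[
\tfrac{\varepsilon}{2}\,\pi_{\as(r;s)}(T)\le 2c_rk_0.
\]
By homogeneity, \(\pi_{\as(r;s)}(T)\le (4c_rk_0/\varepsilon)\|T\|\) for all \(T\in X\), so \(C=4c_rk_0/\varepsilon\) works.

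The main subtlety is the case \(r<1\), where the \(\ell_r\)-expression is only a quasi-norm. The constant \(c_r\) handles this. Closedness itself is routine, since only finitely many evaluations enter each constraint.
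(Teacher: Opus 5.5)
Your argument is correct, but it takes a different route from the paper.

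The paper works inside the summing class. It regards \(\Lcal_{\as(r;s)}(E_1,\ldots,E_m;F)\), equipped with \(\pi_{\as(r;s)}\), as a Banach space for \(r\ge1\) or an \(r\)-Banach space for \(r<1\), citing Botelho--Michels--Pellegrino for completeness. Under the coincidence hypothesis, the inclusion into \(\Lcal(E_1,\ldots,E_m;F)\) is a continuous bijection. The open mapping theorem for \(F\)-spaces then makes its inverse continuous.

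You instead run the Baire category argument directly in the Banach space \(\Lcal(E_1,\ldots,E_m;F)\). You use only three properties of \(\pi_{\as(r;s)}\):
\begin{itemize}
\item it is lower semicontinuous for the operator norm, which is exactly your closedness of the sets \(A_k\), since \(\pi_{\as(r;s)}\) is a supremum of functionals that are continuous in \(T\), one for each finite configuration;
\item it is homogeneous;
\item it is quasi-subadditive with constant \(c_r\).
\end{itemize}
This is the uniform-boundedness-type principle that an everywhere finite, lower semicontinuous quasi-seminorm on a Banach space is bounded.

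What each route buys: yours is self-contained. It needs neither the completeness of the summing ideal nor the \(F\)-space open mapping theorem, and it uses only the cruder quasi-triangle inequality rather than the fact that \(\pi_{\as(r;s)}\) is an \(r\)-norm. The paper's version is shorter because the same Baire argument is hidden inside the cited results. It also fits the standard operator-ideal framework in which such coincidence statements are usually phrased.
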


\begin{proof}
For \(r\ge1\), the ideal \(\Lcal_{\as(r;s)}\), equipped with
\(\pi_{\as(r;s)}\), is a Banach space. For \(0<r<1\), it is an
\(r\)-Banach space with its canonical \(r\)-norm; see
\cite{BotelhoMichelsPellegrino}. Hence it is an \(F\)-space in either
case. Under the coincidence assumption, the identity map from the
summing ideal onto \(\Lcal(E_1,\ldots,E_m;F)\) is a continuous bijection.
The open mapping theorem for \(F\)-spaces \cite[Chapter~1]{KaltonPeckRoberts}
shows that its inverse is continuous, which gives the asserted estimate.
\end{proof}

\section{A finite Walsh construction}
\label{sec:Walsh}

Fix \(d\in\N\). Let
\[
G_d:=\mathbb F_2^d=(\mathbb Z/2\mathbb Z)^d,
\qquad
M:=|G_d|=2^d,
\]
and write \(\oplus\) for addition in \(G_d\). Fix an ordering of
\(G_d\), which identifies \(\ell_q(G_d)\) isometrically with \(\ell_q^M\)
for every \(1\le q\le\infty\).

For \(\alpha\in G_d\), define the Walsh character
\[
h_\alpha(\gamma):=(-1)^{\alpha\cdot\gamma},
\qquad \gamma\in G_d,
\]
where
\[
\alpha\cdot\gamma=
\sum_{j=1}^d\alpha_j\gamma_j\pmod 2.
\]
Let
\[
H_M:=\bigl(h_\alpha(\gamma)\bigr)_{\alpha,\gamma\in G_d}
\]
be the Walsh--Hadamard matrix. Orthogonality of the characters gives
\begin{equation}\label{eq:Hadamard}
H_MH_M^{\mathsf T}=MI_M.
\end{equation}

For \(a^{(1)},\ldots,a^{(m)}\in\ell_1(G_d)\), define their \(m\)-fold
convolution by
\[
\Phi_M(a^{(1)},\ldots,a^{(m)})(\gamma)
:=
\sum_{\gamma_1\oplus\cdots\oplus\gamma_m=\gamma}
 a^{(1)}(\gamma_1)\cdots a^{(m)}(\gamma_m).
\]

\begin{lemma}[Convolution and Walsh identities]
\label{lem:Walsh-identities}
For all \(a^{(1)},\ldots,a^{(m)}\in\ell_1(G_d)\) and all
\(\alpha\in G_d\),
\[
\|\Phi_M(a^{(1)},\ldots,a^{(m)})\|_1
\le\prod_{q=1}^m\|a^{(q)}\|_1,
\]
\[
\Phi_M(h_\alpha,\ldots,h_\alpha)=M^{m-1}h_\alpha,
\qquad
H_Mh_\alpha=Me_\alpha.
\]
\end{lemma}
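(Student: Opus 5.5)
The three assertions of Lemma~\ref{lem:Walsh-identities} are independent, so I would verify them one at a time directly from the definitions. The only structural facts needed are that \(\oplus\) makes \(G_d\) a group, and that each \(h_\alpha\) is a character of \(G_d\) with values \(\pm1\).

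For the \(\ell_1\) bound, I apply the triangle inequality inside the definition of \(\Phi_M\):
\[
\|\Phi_M(a^{(1)},\ldots,a^{(m)})\|_1
\le\sum_{\gamma}\sum_{\gamma_1\oplus\cdots\oplus\gamma_m=\gamma}
|a^{(1)}(\gamma_1)|\cdots|a^{(m)}(\gamma_m)|.
\]
For each tuple \((\gamma_1,\ldots,\gamma_m)\in G_d^m\) there is exactly one \(\gamma\), namely \(\gamma_1\oplus\cdots\oplus\gamma_m\), with that tuple in the inner sum. So the double sum is just the sum over all tuples, and it factors as \(\prod_q\|a^{(q)}\|_1\). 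This is in fact an equality for nonnegative inputs.

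For the convolution identity, I fix \(\gamma\) and parametrize the constraint set by the free variables \(\gamma_1,\ldots,\gamma_{m-1}\), with \(\gamma_m=\gamma\oplus\gamma_1\oplus\cdots\oplus\gamma_{m-1}\). Here I use that every element of \(G_d\) is its own inverse. Since \(\alpha\cdot(\cdot)\) is additive mod \(2\), we have
\[
h_\alpha(\gamma_1)\cdots h_\alpha(\gamma_m)=h_\alpha(\gamma_1\oplus\cdots\oplus\gamma_m)=h_\alpha(\gamma).
\]
So each of the \(M^{m-1}\) terms equals \(h_\alpha(\gamma)\), which gives \(M^{m-1}h_\alpha(\gamma)\). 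When \(m=1\) the statement is trivial.

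For the Hadamard identity, I compute
\[
(H_Mh_\alpha)(\beta)=\sum_\gamma h_\beta(\gamma)h_\alpha(\gamma)=\sum_\gamma(-1)^{(\alpha\oplus\beta)\cdot\gamma}.
\]
This is the \((\beta,\alpha)\) entry of \(H_MH_M^{\mathsf T}\), using the symmetry \(h_\alpha(\gamma)=h_\gamma(\alpha)\). By \eqref{eq:Hadamard} it equals \(M\delta_{\alpha\beta}\). Equivalently, for a nonzero \(\delta=\alpha\oplus\beta\) one pairs \(\gamma\) with \(\gamma\oplus e_j\), where \(\delta_j=1\), and the terms cancel.

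There is no real obstacle here. The only points needing care are the bijective parametrization of the fibres \(\{\gamma_1\oplus\cdots\oplus\gamma_m=\gamma\}\), which have exactly \(M^{m-1}\) elements, and the identification of \(H_Mh_\alpha\) with a column of \(H_MH_M^{\mathsf T}\) via the symmetry of \(H_M\).
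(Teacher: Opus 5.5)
Your proposal is correct and follows essentially the same route as the paper: the triangle inequality applied to the defining sum of $\Phi_M$; the parametrization of the fibre $\{\gamma_1\oplus\cdots\oplus\gamma_m=\gamma\}$ by the free variables $(\gamma_1,\ldots,\gamma_{m-1})$, combined with multiplicativity of $h_\alpha$; and character orthogonality \eqref{eq:Hadamard} for $H_Mh_\alpha=Me_\alpha$. Your version is more explicit, for instance in spelling out why the triangle-inequality bound factors as $\prod_q\|a^{(q)}\|_1$; the appeal to the symmetry $h_\alpha(\gamma)=h_\gamma(\alpha)$ is harmless but unnecessary, since $\sum_\gamma h_\beta(\gamma)h_\alpha(\gamma)$ is already the $(\beta,\alpha)$ entry of $H_MH_M^{\mathsf T}$.
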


\begin{proof}
The first inequality follows by applying the triangle inequality to the
defining sum of \(\Phi_M\). For the convolution identity, note that
\[
h_\alpha(\gamma_1)\cdots h_\alpha(\gamma_m)
=h_\alpha(\gamma_1\oplus\cdots\oplus\gamma_m).
\]
For each fixed \(\gamma\in G_d\), the elements
\(\gamma_1,\ldots,\gamma_{m-1}\) may be chosen freely, and then
\(\gamma_m\) is uniquely determined. Hence there are exactly \(M^{m-1}\)
tuples contributing to the corresponding sum. The last identity is
character orthogonality in matrix form.
\end{proof}

\begin{lemma}[Weak norm of the Walsh family]
\label{lem:weak-Walsh}
The Walsh family satisfies
\[
\left\|(h_\alpha)_{\alpha\in G_d}\right\|_{w,1}
\le M^{3/2},
\]
where each \(h_\alpha\) is regarded as an element of \(\ell_1^M\).
\end{lemma}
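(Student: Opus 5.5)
The weak \(1\)-norm of a family in \(\ell_1^M\) is computed by testing against the unit ball of the dual space \((\ell_1^M)^*=\ell_\infty^M\). So we fix \(\varphi=(\varphi_\gamma)_{\gamma\in G_d}\) with \(\|\varphi\|_\infty\le1\) and aim to show
\[
\sum_{\alpha\in G_d}|\varphi(h_\alpha)|
=\sum_{\alpha\in G_d}\Bigl|\sum_{\gamma\in G_d}h_\alpha(\gamma)\varphi_\gamma\Bigr|
=\|H_M\varphi\|_1\le M^{3/2}.
\]
Taking the supremum over \(\varphi\) then gives the claim, so everything reduces to bounding \(\|H_M\varphi\|_1\).

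We pass from \(\ell_1\) to \(\ell_2\) with the Cauchy--Schwarz inequality, which gives \(\|H_M\varphi\|_1\le M^{1/2}\|H_M\varphi\|_2\). The key point is that \(H_M\) is \(\sqrt M\) times an orthogonal matrix. Indeed, \(H_M\) is a real symmetric matrix, and \eqref{eq:Hadamard} gives \(H_M^{\mathsf T}H_M=MI_M\). In the complex case we use \(\overline{H_M}=H_M\), so that \(\|H_M\varphi\|_2^2=\langle H_M^{\mathsf T}H_M\varphi,\varphi\rangle=M\|\varphi\|_2^2\). Hence
\[
\|H_M\varphi\|_2=M^{1/2}\|\varphi\|_2\le M^{1/2}\cdot M^{1/2}\|\varphi\|_\infty\le M.
\]
Combining the two estimates gives \(\|H_M\varphi\|_1\le M^{1/2}\cdot M=M^{3/2}\), as required.

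No step is a real obstacle here. The only point needing care is the identification of the dual ball with \(\ell_\infty^M\), together with the use of orthogonality (Parseval) in place of the trivial bound. The trivial bound \(|\varphi(h_\alpha)|\le M\) would only give \(M^2\), and the gain of the factor \(M^{1/2}\) is exactly what the Hadamard identity \eqref{eq:Hadamard} supplies.
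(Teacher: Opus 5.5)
Your proposal is correct and follows the paper's proof essentially step for step. Both test against the unit ball of \(\ell_\infty^M\), apply Cauchy--Schwarz to pass to \(\|H_M\varphi\|_2\), use the Hadamard identity to get \(\|H_M\varphi\|_2=M^{1/2}\|\varphi\|_2\le M\|\varphi\|_\infty\), and conclude with the bound \(M^{3/2}\).
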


\begin{proof}
Let \(a\in B_{\ell_\infty^M}\). By Cauchy--Schwarz and
\eqref{eq:Hadamard},
\[
\begin{aligned}
\sum_{\alpha\in G_d}|\langle a,h_\alpha\rangle|
&\le M^{1/2}
\left(\sum_{\alpha\in G_d}|\langle a,h_\alpha\rangle|^2\right)^{1/2}\\
&=M^{1/2}\|H_Ma\|_2
=M\|a\|_2
\le M^{3/2}\|a\|_\infty.
\end{aligned}
\]
Taking the supremum over \(a\in B_{\ell_\infty^M}\) proves the result.
\end{proof}

\begin{proposition}[Walsh test operators]
\label{prop:Walsh-test}
Let \(m\in\N\), \(1\le p\le\infty\), and \(M=2^d\). There is an
operator
\[
A_{M,p}\in\Lcal({}^m\ell_1^M;\ell_p^M)
\]
with \(\|A_{M,p}\|\le1\) and
\[
\|A_{M,p}(h_\alpha,\ldots,h_\alpha)\|_p
=
\begin{cases}
M^{m-1+1/p},&1\le p\le2,\\[4pt]
M^{m-1/p},&2\le p\le\infty,
\end{cases}
\qquad \alpha\in G_d.
\]
At \(p=2\), the exponents in the two formulas agree, since
\[
m-1+\frac12=m-\frac12.
\]
\end{proposition}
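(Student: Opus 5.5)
For \(1\le p\le2\) I would take \(A_{M,p}:=\iota_{1,p}\circ\Phi_M\), where \(\iota_{1,p}:\ell_1^M\to\ell_p^M\) is the formal identity. For \(2\le p\le\infty\) I would take \(A_{M,p}:=M^{-1/2}H_M\circ\Phi_M\), viewing the normalized Walsh--Hadamard matrix as a map \(\ell_1^M\to\ell_p^M\). Both norm bounds and both value computations then come from Lemma~\ref{lem:Walsh-identities} together with elementary norm estimates on \(\ell_q^M\).

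\emph{Case \(1\le p\le2\).} The first inequality of Lemma~\ref{lem:Walsh-identities} gives \(\|\Phi_M\|\le1\) as an \(m\)-linear map into \(\ell_1^M\). Since \(\|\iota_{1,p}\|=1\), we get \(\|A_{M,p}\|\le1\). The convolution identity gives \(A_{M,p}(h_\alpha,\ldots,h_\alpha)=M^{m-1}h_\alpha\). Every entry of \(h_\alpha\) has modulus \(1\), so \(\|h_\alpha\|_p=M^{1/p}\), and the value is \(M^{m-1+1/p}\).

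\emph{Case \(2\le p\le\infty\).} The main step is to check that \(U:=M^{-1/2}H_M\) has norm at most \(1\) from \(\ell_1^M\) to \(\ell_p^M\). The matrix \(U\) is unitary on \(\ell_2^M\) by \eqref{eq:Hadamard}. It maps \(\ell_1^M\to\ell_\infty^M\) with norm \(M^{-1/2}\), because all its entries have modulus \(M^{-1/2}\). Hence
\[
\|Ux\|_p\le\|Ux\|_2=\|x\|_2\le\|x\|_1 ,
\]
where the first inequality uses \(p\ge2\) and the inclusion \(\ell_2\subset\ell_p\) with norm \(1\). This already suffices, and no interpolation is needed. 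Combining with \(\|\Phi_M\|\le1\) gives \(\|A_{M,p}\|\le1\).

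For the value, the identities \(\Phi_M(h_\alpha,\ldots,h_\alpha)=M^{m-1}h_\alpha\) and \(H_Mh_\alpha=Me_\alpha\) give
\[
A_{M,p}(h_\alpha,\ldots,h_\alpha)=M^{-1/2}M^{m-1}Me_\alpha=M^{m-1/2}e_\alpha ,
\]
so its \(\ell_p\)-norm is \(M^{m-1/2}\). However, the statement claims \(M^{m-1/p}\), which equals \(M^{m-1/2}\) only at \(p=2\). To reach \(M^{m-1/p}\) we need a better normalization: replace \(M^{-1/2}\) by \(M^{-1/p}\) and use the sharper bound \(\|H_M\|_{\ell_1\to\ell_p}\le M^{1/p}\).

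That bound holds directly. For \(x\in\ell_1^M\) we have \(H_Mx=\sum_\gamma x(\gamma)H_Me_\gamma\), and each column \(H_Me_\gamma\) is a \(\pm1\) vector with \(\ell_p\)-norm \(M^{1/p}\). By the triangle inequality, \(\|H_Mx\|_p\le M^{1/p}\|x\|_1\). So I would set \(A_{M,p}:=M^{-1/p}H_M\circ\Phi_M\), which has norm at most \(1\). Its value is
\[
A_{M,p}(h_\alpha,\ldots,h_\alpha)=M^{-1/p}M^{m-1}Me_\alpha=M^{m-1/p}e_\alpha ,
\]
with norm \(M^{m-1/p}\), as claimed. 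At \(p=\infty\) this gives \(M^m\).

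This refinement is the one point needing care: the naive unitary normalization is not sharp for \(p>2\), and the column-wise triangle inequality fixes it. The agreement of the two formulas at \(p=2\) is the stated arithmetic identity \(m-1+\frac12=m-\frac12\).
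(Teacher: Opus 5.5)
Your final construction is the paper's: \(\Phi_M\) with values in \(\ell_p^M\) for \(1\le p\le2\), and \(M^{-1/p}H_M\circ\Phi_M\) for \(2\le p\le\infty\). Your column-wise bound \(\|H_Mx\|_p\le M^{1/p}\|x\|_1\) is equivalent to the paper's factorization \(\|H_Mz\|_\infty\le\|z\|_1\), \(\|w\|_p\le M^{1/p}\|w\|_\infty\), and the \(M^{-1/2}\) normalization is a harmless detour you correctly drop, so the proof is correct.
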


\begin{proof}
If \(1\le p\le2\), take \(A_{M,p}=\Phi_M\), with values regarded in
\(\ell_p(G_d)\). Lemma~\ref{lem:Walsh-identities} and
\(\|z\|_p\le\|z\|_1\) give \(\|A_{M,p}\|\le1\). Moreover,
\[
\|A_{M,p}(h_\alpha,\ldots,h_\alpha)\|_p
=M^{m-1}\|h_\alpha\|_p=M^{m-1+1/p}.
\]

If \(2\le p\le\infty\), define
\[
W_{M,p}:=M^{-1/p}H_M:\ell_1(G_d)\longrightarrow\ell_p(G_d),
\]
where \(M^{1/\infty}=1\), and put
\[
A_{M,p}:=W_{M,p}\circ\Phi_M.
\]
Since \(\|H_Mz\|_\infty\le\|z\|_1\) and
\(\|w\|_p\le M^{1/p}\|w\|_\infty\), one has \(\|W_{M,p}\|\le1\).
Thus \(\|A_{M,p}\|\le1\). Finally,
\[
A_{M,p}(h_\alpha,\ldots,h_\alpha)
=M^{-1/p}H_M(M^{m-1}h_\alpha)
=M^{m-1/p}e_\alpha,
\]
which proves the second formula.
\end{proof}

\section{Optimality of the BPR exponents}

Let
\[
P_M:\ell_1\longrightarrow\ell_1^M
\]
denote the coordinate projection onto the first \(M\) coordinates and,
for \(1\le p\le\infty\), let
\[
J_{M,p}:\ell_p^M\longrightarrow\ell_p
\]
denote extension by zero. We identify the operator in
Proposition~\ref{prop:Walsh-test} with
\[
J_{M,p}\circ A_{M,p}\circ(P_M,\ldots,P_M)
\in\Lcal({}^m\ell_1;\ell_p),
\]
and the vectors \(h_\alpha\in\ell_1^M\) with their extensions by zero in
\(\ell_1\). These identifications preserve the relevant operator norms and
weak \(1\)-norms.

\begin{theorem}[Uniform optimality of the BPR exponents]
\label{thm:uniform-optimality}
Let \(m\in\N\), \(1\le p\le\infty\), and \(r>0\). Assume that there is
\(C>0\) such that
\[
\pi_{\as(r;1)}(T)\le C\|T\|
\]
for every \(T\in\Lcal({}^m\ell_1;\ell_p)\). Then
\[
r\ge r_{m,p}.
\]
\end{theorem}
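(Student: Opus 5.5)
The plan is to test the assumed uniform inequality against the Walsh test operators of Proposition~\ref{prop:Walsh-test}, with all $m$ input families equal to the Walsh family $(h_\alpha)_{\alpha\in G_d}$. I then compare growth rates in $M=2^d$ as $d\to\infty$. Throughout I use the identifications fixed just before the statement: $A_{M,p}$ is viewed as an element of $\Lcal({}^m\ell_1;\ell_p)$ and $h_\alpha$ as an element of $\ell_1$. These identifications preserve $\|A_{M,p}\|\le 1$ and the weak $1$-norm bound $\|(h_\alpha)_{\alpha}\|_{w,1}\le M^{3/2}$ of Lemma~\ref{lem:weak-Walsh}.

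Fix $d$ and apply the hypothesis to $T=A_{M,p}$, with $N=M$ and $x^{(q)}_\alpha=h_\alpha$ for every $q=1,\ldots,m$. By Proposition~\ref{prop:Walsh-test} every term $\|A_{M,p}(h_\alpha,\ldots,h_\alpha)\|_p$ equals the same quantity $M^{\beta_p}$. Here $\beta_p=m-1+1/p$ for $1\le p\le 2$ and $\beta_p=m-1/p$ for $2\le p\le\infty$. The left-hand side of \eqref{eq:absolute-summing} is therefore exactly $M^{1/r}M^{\beta_p}$. By Lemma~\ref{lem:weak-Walsh}, the right-hand side is at most $C\|A_{M,p}\|(M^{3/2})^m\le C M^{3m/2}$. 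This gives
\[
M^{1/r+\beta_p}\le C\,M^{3m/2}\qquad\text{for all } M=2^d .
\]

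Since $C$ does not depend on $d$, I take logarithms, divide by $\log M=d\log 2$, and let $d\to\infty$. This yields $1/r\le 3m/2-\beta_p$.

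For $1\le p\le2$ this reads $1/r\le m/2+1-1/p=(mp+2p-2)/(2p)=1/r_{m,p}$.

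For $2\le p\le\infty$ it reads $1/r\le m/2+1/p=(mp+2)/(2p)=1/r_{m,p}$. The case $p=\infty$ is read with $1/p=0$, giving $1/r\le m/2$.

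In both cases $r\ge r_{m,p}$.

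There is no real obstacle once Proposition~\ref{prop:Walsh-test} and Lemma~\ref{lem:weak-Walsh} are available. The only points needing care are bookkeeping. First, the identifications with $\ell_1$ and $\ell_p$ must not change the operator norm or the weak $1$-norm. This holds because $P_M$ and $J_{M,p}$ are norm-one, and functionals on $\ell_1^M$ extend to $\ell_1$ with the same norm. Second, the exponent arithmetic must match the two branches of \eqref{eq:BPR-exponents} exactly, including the overlap at $p=2$, where both give $1/r_{m,2}=(m+1)/2$.
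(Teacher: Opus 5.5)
Your proposal is correct and is essentially the paper's own proof: you test the uniform bound on $A_{M,p}$ with all $m$ input families equal to the Walsh family, obtain $M^{1/r+\beta_p}\le C M^{3m/2}$ for every $M=2^d$, and compare exponents in each branch. One minor bookkeeping remark: the upper bound on the weak $1$-norm of the zero-extended Walsh family in $\ell_1$ comes from restricting functionals in $B_{\ell_\infty}$ to the first $M$ coordinates (equivalently, from the inclusion $\ell_1^M\hookrightarrow\ell_1$ having norm one), whereas extending functionals from $\ell_1^M$ to $\ell_1$, as you cite, only gives the reverse inequality, which is not needed.
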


\begin{proof}
Let \(M=2^d\). Suppose first that \(1\le p\le2\). Then
\[
\begin{aligned}
M^{1/r+m-1+1/p}
&=
\left(
\sum_{\alpha\in G_d}
\|A_{M,p}(h_\alpha,\ldots,h_\alpha)\|_p^r
\right)^{1/r}\\
&\le
\pi_{\as(r;1)}(A_{M,p})
\left\|(h_\alpha)_{\alpha\in G_d}\right\|_{w,1}^m\\
&\le
C\|A_{M,p}\|M^{3m/2}
\le
CM^{3m/2}.
\end{aligned}
\]
Since this holds for every \(M=2^d\),
\[
\frac1r+m-1+\frac1p\le\frac{3m}{2}.
\]
Thus
\[
\frac1r\le\frac m2+1-\frac1p,
\]
which is equivalent to
\[
r\ge\frac{2p}{mp+2p-2}.
\]

Now suppose that \(2\le p\le\infty\). By the second part of
Proposition~\ref{prop:Walsh-test},
\[
\begin{aligned}
M^{1/r+m-1/p}
&=
\left(
\sum_{\alpha\in G_d}
\|A_{M,p}(h_\alpha,\ldots,h_\alpha)\|_p^r
\right)^{1/r}\\
&\le
\pi_{\as(r;1)}(A_{M,p})
\left\|(h_\alpha)_{\alpha\in G_d}\right\|_{w,1}^m\\
&\le
C\|A_{M,p}\|M^{3m/2}
\le
CM^{3m/2}.
\end{aligned}
\]
It follows that
\[
\frac1r+m-\frac1p\le\frac{3m}{2},
\]
and hence
\[
r\ge\frac{2p}{mp+2}.
\]
For \(p=\infty\), this reads \(r\ge2/m\).
\end{proof}

\begin{corollary}[Optimality of the BPR exponents]
\label{cor:optimality}
For every \(m\in\N\) and \(1\le p\le\infty\), the exponent
\(r_{m,p}\) in Theorem~\ref{thm:BPR} is optimal.
\end{corollary}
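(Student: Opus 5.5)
The plan is to combine Lemma~\ref{lem:uniform-coincidence} with Theorem~\ref{thm:uniform-optimality}. We first need to say what optimality means here. It means that whenever
\[
\Lcal({}^m\ell_1;\ell_p)=\Lcal_{\as(r;1)}({}^m\ell_1;\ell_p),
\]
one has \(r\ge r_{m,p}\). Theorem~\ref{thm:BPR} shows that this coincidence does hold at \(r=r_{m,p}\). Hence \(r_{m,p}\) is exactly the least admissible exponent. Note also that the coincidence propagates upward: for \(r'\ge r\), the \(\ell_r\)-sum dominates the \(\ell_{r'}\)-sum, so the set of admissible exponents is an up-set in \(r\).

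Now fix \(r>0\) for which the coincidence holds. We apply Lemma~\ref{lem:uniform-coincidence} with \(E_1=\cdots=E_m=\ell_1\), \(F=\ell_p\) and \(s=1\). This produces a constant \(C>0\) such that
\[
\pi_{\as(r;1)}(T)\le C\|T\|
\]
for every \(T\in\Lcal({}^m\ell_1;\ell_p)\). This is precisely the hypothesis of Theorem~\ref{thm:uniform-optimality}, which therefore yields \(r\ge r_{m,p}\). Thus no exponent smaller than \(r_{m,p}\) can give the coincidence, and this is the claimed optimality.

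There is no real obstacle, since the substantive estimates are already in Proposition~\ref{prop:Walsh-test} and Theorem~\ref{thm:uniform-optimality}. The step that needs the most care is passing from a qualitative coincidence of classes to the uniform norm bound. That passage relies on the \(F\)-space structure of the summing ideal when \(r<1\), which is exactly the case relevant for \(m\ge2\). This step is already handled in Lemma~\ref{lem:uniform-coincidence}.

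One should also check that the test operators \(J_{M,p}\circ A_{M,p}\circ(P_M,\ldots,P_M)\) used in Theorem~\ref{thm:uniform-optimality} are genuinely elements of \(\Lcal({}^m\ell_1;\ell_p)\) with norm at most \(1\). It should also be confirmed that the weak \(1\)-norms of the zero-extended Walsh vectors in \(\ell_1\) coincide with those computed in \(\ell_1^M\). Both facts hold because \(P_M\) and \(J_{M,p}\) have norm \(1\), and because a functional in \(B_{\ell_\infty}\) restricts to one in \(B_{\ell_\infty^M}\), while every element of \(B_{\ell_\infty^M}\) extends to \(B_{\ell_\infty}\) by zero.
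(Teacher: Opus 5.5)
Your proposal is correct and follows the paper's proof exactly. Lemma~\ref{lem:uniform-coincidence} turns the coincidence into the uniform bound \(\pi_{\as(r;1)}(T)\le C\|T\|\), Theorem~\ref{thm:uniform-optimality} then forces \(r\ge r_{m,p}\), and Theorem~\ref{thm:BPR} supplies coincidence at \(r=r_{m,p}\); your extra checks on the zero-extended test operators and weak \(1\)-norms are sound and match the identifications the paper makes before Theorem~\ref{thm:uniform-optimality}.
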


\begin{proof}
Suppose that
\[
\Lcal({}^m\ell_1;\ell_p)
=
\Lcal_{\as(r;1)}({}^m\ell_1;\ell_p)
\]
for some \(r>0\). Lemma~\ref{lem:uniform-coincidence} gives a constant
\(C>0\) such that
\[
\pi_{\as(r;1)}(T)\le C\|T\|
\]
for every \(T\in\Lcal({}^m\ell_1;\ell_p)\). Theorem~\ref{thm:uniform-optimality}
therefore gives \(r\ge r_{m,p}\). Since Theorem~\ref{thm:BPR} gives
coincidence at \(r=r_{m,p}\), the exponent is optimal.
\end{proof}

\section{Local consequences of the Walsh construction}

The preceding proof depends only on finite-dimensional copies of
\(\ell_p\) in the range. This observation leads to the following
consequence.

\begin{definition}
Let \(1\le p\le\infty\). We say that \(\ell_p\) is \emph{uniformly
finitely representable} in a Banach space \(F\) if there is a constant
\(D\ge1\) such that, for every \(n\in\N\), there is an isomorphism
\[
J_n:\ell_p^n\longrightarrow F_n\subset F
\]
with
\[
\|J_n\|\le1
\qquad\text{and}\qquad
\|J_n^{-1}\|\le D.
\]
\end{definition}

\begin{theorem}[Local obstruction]
\label{thm:local-obstruction}
Let \(m\in\N\), \(1\le p\le\infty\), and let \(F\) be a Banach space
in which \(\ell_p\) is uniformly finitely representable. If
\[
\Lcal({}^m\ell_1;F)
=
\Lcal_{\as(r;1)}({}^m\ell_1;F)
\]
for some \(r>0\), then
\[
r\ge r_{m,p}.
\]
\end{theorem}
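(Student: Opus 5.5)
Assume the coincidence \(\Lcal({}^m\ell_1;F)=\Lcal_{\as(r;1)}({}^m\ell_1;F)\). The plan is to transport the Walsh test operators of Proposition~\ref{prop:Walsh-test} into \(F\) through the embeddings \(J_n\), and then repeat the computation from the proof of Theorem~\ref{thm:uniform-optimality}. The factor \(D\) enters only as a constant, so it does not affect the exponents.

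\emph{Step 1: a uniform constant.} Lemma~\ref{lem:uniform-coincidence} gives \(C>0\) such that \(\pi_{\as(r;1)}(T)\le C\|T\|\) for every \(T\in\Lcal({}^m\ell_1;F)\).

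\emph{Step 2: transport the test operators.} Fix \(M=2^d\) and take \(n=M\), identifying \(\ell_p^M\) with \(\ell_p(G_d)\). Define
\[
T_M:=J_M\circ A_{M,p}\circ(P_M,\ldots,P_M)\in\Lcal({}^m\ell_1;F).
\]
Since \(\|J_M\|\le1\), \(\|A_{M,p}\|\le1\) and \(\|P_M\|\le1\), we get \(\|T_M\|\le1\). For each \(\alpha\in G_d\), the bound \(\|J_M^{-1}\|\le D\) gives
\[
\|T_M(h_\alpha,\ldots,h_\alpha)\|_F\ge D^{-1}\|A_{M,p}(h_\alpha,\ldots,h_\alpha)\|_p=D^{-1}M^{\theta},
\]
where \(\theta=m-1+1/p\) for \(1\le p\le2\) and \(\theta=m-1/p\) for \(2\le p\le\infty\). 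Here the \(h_\alpha\) are regarded in \(\ell_1\) by extension by zero. This leaves their weak \(1\)-norm unchanged: functionals on \(\ell_1\) restrict to \(B_{\ell_\infty^M}\), and every element of \(B_{\ell_\infty^M}\) extends to a functional on \(\ell_1\) of the same norm. So Lemma~\ref{lem:weak-Walsh} still gives \(\|(h_\alpha)_\alpha\|_{w,1}\le M^{3/2}\).

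\emph{Step 3: compare exponents.} Insert the family \((h_\alpha,\ldots,h_\alpha)_{\alpha\in G_d}\) into the summing inequality:
\[
D^{-1}M^{1/r+\theta}\le\Bigl(\sum_{\alpha\in G_d}\|T_M(h_\alpha,\ldots,h_\alpha)\|^r\Bigr)^{1/r}\le C\|T_M\|\,M^{3m/2}\le CM^{3m/2}.
\]
This holds for every \(d\) with \(C,D\) independent of \(M\). Letting \(M\to\infty\) yields \(1/r+\theta\le 3m/2\). The same algebra as in Theorem~\ref{thm:uniform-optimality} then gives \(r\ge r_{m,p}\) in each branch, with \(p=\infty\) read as \(1/p=0\).

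The only point needing care is the direction of the norm bounds on \(J_n\). The hypothesis \(\|J_n\|\le1\) keeps \(\|T_M\|\le1\), and \(\|J_n^{-1}\|\le D\) gives the lower bound on \(\|T_M(h_\alpha,\ldots,h_\alpha)\|\). Both estimates are uniform in \(M\), which is exactly what the limiting argument requires. The argument needs the embeddings only for \(n=2^d\).
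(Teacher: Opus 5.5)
Your proposal is correct and follows the paper's proof essentially step for step: it gets the uniform constant from Lemma~\ref{lem:uniform-coincidence}, composes the Walsh test operator \(A_{M,p}\) with the embedding \(J_M\), and compares the lower bound \(D^{-1}M^{1/r+\theta}\) with the upper bound \(CM^{3m/2}\). Your explicit composition with \((P_M,\ldots,P_M)\) and your check that extension by zero preserves the weak \(1\)-norm only spell out identifications that the paper makes implicitly before Theorem~\ref{thm:uniform-optimality}.
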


\begin{proof}
By Lemma~\ref{lem:uniform-coincidence}, there is \(C>0\) such that
\[
\pi_{\as(r;1)}(T)\le C\|T\|
\]
for every \(T\in\Lcal({}^m\ell_1;F)\). Let \(M=2^d\), and choose an
isomorphism
\[
J_M:\ell_p^M\longrightarrow F_M\subset F
\]
with \(\|J_M\|\le1\) and \(\|J_M^{-1}\|\le D\). Define
\[
\widetilde A_{M,p}:=J_M\circ A_{M,p}
\in\Lcal({}^m\ell_1;F),
\]
where \(A_{M,p}\) is the operator on \((\ell_1)^m\) fixed before
Theorem~\ref{thm:uniform-optimality}. Then
\[
\|\widetilde A_{M,p}\|\le1
\]
and
\[
\|\widetilde A_{M,p}(h_\alpha,\ldots,h_\alpha)\|_F
\ge D^{-1}\|A_{M,p}(h_\alpha,\ldots,h_\alpha)\|_p.
\]

If \(1\le p\le2\), then
\[
\begin{aligned}
D^{-1}M^{1/r+m-1+1/p}
&\le
\left(
\sum_{\alpha\in G_d}
\|\widetilde A_{M,p}(h_\alpha,\ldots,h_\alpha)\|_F^r
\right)^{1/r}\\
&\le
\pi_{\as(r;1)}(\widetilde A_{M,p})
\left\|(h_\alpha)_{\alpha\in G_d}\right\|_{w,1}^m\\
&\le
C\|\widetilde A_{M,p}\|M^{3m/2}
\le
CM^{3m/2}.
\end{aligned}
\]
Thus
\[
\frac1r\le\frac m2+1-\frac1p.
\]

If \(2\le p\le\infty\), the same argument gives
\[
\begin{aligned}
D^{-1}M^{1/r+m-1/p}
&\le
\left(
\sum_{\alpha\in G_d}
\|\widetilde A_{M,p}(h_\alpha,\ldots,h_\alpha)\|_F^r
\right)^{1/r}\\
&\le
\pi_{\as(r;1)}(\widetilde A_{M,p})
\left\|(h_\alpha)_{\alpha\in G_d}\right\|_{w,1}^m\\
&\le
C\|\widetilde A_{M,p}\|M^{3m/2}
\le
CM^{3m/2},
\end{aligned}
\]
whence
\[
\frac1r\le\frac m2+\frac1p.
\]
The two estimates are equivalent to \(r\ge r_{m,p}\).
\end{proof}

\begin{corollary}[Sharp \(L_1\)-valued coincidence]
\label{cor:L1}
Let \(m\in\N\). Then
\[
\Lcal({}^m\ell_1;L_1[0,1])
=
\Lcal_{\as(2/m;1)}({}^m\ell_1;L_1[0,1]),
\]
and \(2/m\) is optimal.
\end{corollary}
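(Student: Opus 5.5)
The proof has two parts: a coincidence statement and an optimality statement.

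\emph{Coincidence.} We use Botelho's cotype theorem, as the introduction suggests. It says that if \(F\) has cotype \(q\), then every continuous \(m\)-linear map from \(E_1\times\cdots\times E_m\) into \(F\) is absolutely \((q/m;1)\)-summing. The space \(L_1[0,1]\) has cotype \(2\), so every \(T\in\Lcal({}^m\ell_1;L_1[0,1])\) is absolutely \((2/m;1)\)-summing. Since the reverse inclusion \(\Lcal_{\as(2/m;1)}\subset\Lcal\) holds trivially, this gives the equality of the two classes.

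If one wants to avoid quoting Botelho in full, the same conclusion can be reached directly. The linear case (\(m=1\)) is the fact that \(\Lcal(\ell_1;F)=\Lcal_{\as(2;1)}(\ell_1;F)\) for cotype-\(2\) spaces \(F\). Each term \(\|T(x_j^{(1)},\ldots,x_j^{(m)})\|\) can be compared with the linear maps obtained by freezing all but one variable. Hölder's inequality with \(m\) factors, each in \(\ell_{2}\), then produces exactly the exponent \(2/m\). I would cite Botelho's theorem, since the paper already attributes the \(2/m\) exponent to it.

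\emph{Optimality.} We apply Theorem~\ref{thm:local-obstruction} with \(p=1\). We need \(\ell_1\) to be uniformly finitely representable in \(L_1[0,1]\), and in fact isometrically with \(D=1\). Split \([0,1]\) into \(n\) intervals \(I_k\) of length \(1/n\) and set \(J_n(e_k)=n\,\mathbf 1_{I_k}\). These functions are disjointly supported and have norm \(1\), so
\[
\Bigl\|\sum_k c_k\,n\mathbf 1_{I_k}\Bigr\|_{L_1}=\sum_k|c_k|,
\]
and \(J_n\) is an isometry onto its range. Theorem~\ref{thm:local-obstruction} then forces \(r\ge r_{m,1}\). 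From \eqref{eq:BPR-exponents},
\[
r_{m,1}=\frac{2}{m+2-2}=\frac2m.
\]

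Together with the coincidence part, this shows that \(2/m\) is optimal. The only step needing care is matching the precise hypotheses of Botelho's cotype theorem, namely that the coincidence holds for arbitrary domains with weak \(1\)-norms and exponent \(q/m\) for cotype \(q\). Everything else is immediate from results already stated in the paper.
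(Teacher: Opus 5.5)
Your optimality half is correct and is the paper's argument. The functions \(n\mathbf 1_{I_k}\) give an isometric copy of \(\ell_1^n\) in \(L_1[0,1]\), so Theorem~\ref{thm:local-obstruction} with \(p=1\) forces \(r\ge r_{m,1}=2/m\).

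The coincidence half, however, rests on a misstatement of Botelho's theorem. You put the cotype hypothesis on the range \(F\) and allow arbitrary domains, and that version is false. Take \(m=2\), \(E_1=E_2=c_0\), \(F=\ell_2\) (which has cotype \(2\)), and \(T(x,y)=\sum_j c_jx_jy_je_j\) with \(c=(1/j)_j\in\ell_2\setminus\ell_1\). Then \(\|T\|\le\|c\|_2\), and the unit vectors \(e_1,\ldots,e_n\) have weak \(1\)-norm \(1\) in \(c_0\). Yet \(\sum_{j\le n}\|T(e_j,e_j)\|_2=\sum_{j\le n}1/j\to\infty\), so \(T\) is not absolutely \((1;1)=(2/m;1)\)-summing.

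The correct theorem, as the paper uses it, puts the cotype on the \emph{domain}. Since \(\ell_1\) has cotype \(2\), the identity of \(\ell_1\) is \((2;1)\)-summing, i.e. \((\sum_j\|x_j\|_1^2)^{1/2}\le C\|(x_j)_j\|_{w,1}\). Combine this with the trivial bound \(\|T(x_j^{(1)},\ldots,x_j^{(m)})\|\le\|T\|\prod_q\|x_j^{(q)}\|_1\) and Hölder's inequality with \(m\) factors in \(\ell_2\). This gives \((\sum_j\|T(x_j^{(1)},\ldots,x_j^{(m)})\|^{2/m})^{m/2}\le C^m\|T\|\prod_q\|(x_j^{(q)})_j\|_{w,1}\) for \emph{every} range space, so the cotype of \(L_1[0,1]\) plays no role.

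The same fix repairs your direct sketch. ``Freezing all but one variable'' does not work as stated, because the frozen linear operators change with \(j\), while a summing inequality applies to one fixed operator. The linear ingredient you need is the \((2;1)\)-summability of \(\mathrm{id}_{\ell_1}\), which holds independently of \(F\). Once the cotype hypothesis is moved to the domain, your proof coincides with the paper's.
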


\begin{proof}
Since \(\ell_1\) has cotype \(2\), Botelho's theorem implies that every
continuous \(m\)-linear map from \((\ell_1)^m\) into an arbitrary Banach
space is absolutely \((2/m;1)\)-summing; see \cite{Botelho} and the
formulation recalled in \cite[Section~4]{BPR}. This proves coincidence at
\(2/m\).

For the lower bound, choose pairwise disjoint measurable sets
\(E_1,\ldots,E_n\subset[0,1]\) of positive measure. The map
\[
(a_j)_{j=1}^n\longmapsto
\sum_{j=1}^n a_j\frac{\mathbf 1_{E_j}}{\mu(E_j)}
\]
is an isometric embedding of \(\ell_1^n\) into \(L_1[0,1]\). Thus
\(\ell_1\) is uniformly finitely representable in \(L_1[0,1]\). If
coincidence held at some exponent \(r<2/m\), then
Theorem~\ref{thm:local-obstruction}, applied with \(p=1\), would imply
\(r\ge2/m\), a contradiction.
\end{proof}

For a Banach space \(F\), define the optimal coincidence exponent by
\[
r_{\mathrm{opt}}(m,F)
:=
\inf\left\{r>0:
\Lcal({}^m\ell_1;F)
=
\Lcal_{\as(r;1)}({}^m\ell_1;F)\right\}.
\]

\begin{corollary}[Cotype alone does not determine the exponent]
\label{cor:cotype}
Let \(m\ge2\). Then
\[
r_{\mathrm{opt}}(m,\ell_1)=\frac2m,
\qquad
r_{\mathrm{opt}}(m,\ell_2)=\frac2{m+1}.
\]
Although both \(\ell_1\) and \(\ell_2\) have cotype \(2\), their sharp
absolute \((r;1)\)-coincidence exponents are different.
\end{corollary}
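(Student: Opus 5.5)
The two equalities come from two separate pieces of information. First, the $\ell_1$-valued value $r_{\mathrm{opt}}(m,\ell_1)=2/m$. Second, the $\ell_2$-valued value $r_{\mathrm{opt}}(m,\ell_2)=2/(m+1)$. In each case I will establish an upper bound (coincidence holds at the claimed exponent) and a matching lower bound (coincidence fails below it). The cotype remark is then immediate: $\ell_1$ and $\ell_2$ both have cotype $2$, and $2/m\neq 2/(m+1)$.

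For $F=\ell_1$, Theorem~\ref{thm:BPR} with $p=1$ gives $r_{m,1}=\frac{2}{m+2-2}=\frac{2}{m}$. So coincidence holds at $2/m$, and the infimum is at most $2/m$. Alternatively, Botelho's cotype theorem as used in Corollary~\ref{cor:L1} gives the same. For the lower bound, suppose coincidence holds for some $r>0$. Then Corollary~\ref{cor:optimality}, or directly Lemma~\ref{lem:uniform-coincidence} combined with Theorem~\ref{thm:uniform-optimality} at $p=1$, yields $r\ge r_{m,1}=2/m$. So every admissible $r$ is at least $2/m$, and the infimum equals $2/m$. Equivalently, since $\ell_1$ is trivially uniformly finitely representable in itself, Theorem~\ref{thm:local-obstruction} applies.

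For $F=\ell_2$, I apply the same two steps at $p=2$. Theorem~\ref{thm:BPR} gives coincidence at $r_{m,2}$, and both branches of \eqref{eq:BPR-exponents} evaluate to $\frac{4}{2m+2}=\frac{2}{m+1}$. Theorem~\ref{thm:uniform-optimality} gives $r\ge 2/(m+1)$ for any coincidence exponent. Hence $r_{\mathrm{opt}}(m,\ell_2)=2/(m+1)$.

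Finally, I will recall that $\ell_1$ has cotype $2$ and $\ell_2$ has cotype $2$, both of which are standard. For $m\ge2$, indeed for any $m\ge1$, we have $2/m>2/(m+1)$. There is no real obstacle here. The only point needing care is that the infimum is attained and that the lower bound applies to every $r$ in the set. Both are exactly what the coincidence at $r_{m,p}$ together with Corollary~\ref{cor:optimality} provide.
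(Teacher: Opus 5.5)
Your proposal is correct and essentially matches the paper's proof: the \(\ell_2\) case is argued identically, via Theorem~\ref{thm:BPR} and Corollary~\ref{cor:optimality} at \(p=2\). For \(\ell_1\), the paper uses Botelho's theorem together with Theorem~\ref{thm:local-obstruction} (with \(p=1\), \(D=1\)); your main route is Theorem~\ref{thm:BPR} and Corollary~\ref{cor:optimality} at \(p=1\), which is equally valid since \(r_{m,1}=2/m\), and you already name the paper's route as an alternative.
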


\begin{proof}
For the range \(\ell_1\), Botelho's theorem gives coincidence at \(2/m\),
and Theorem~\ref{thm:local-obstruction} with \(F=\ell_1\), \(p=1\), and
\(D=1\) proves optimality. For the range \(\ell_2\), coincidence is given
by Theorem~\ref{thm:BPR}, and optimality follows from
Corollary~\ref{cor:optimality} with \(p=2\). The two values differ when
\(m\ge2\).
\end{proof}

\begin{remark}
Theorem~\ref{thm:local-obstruction} is a sufficient mechanism for obtaining
a lower bound. It does not classify range spaces with a prescribed
coincidence exponent. Corollary~\ref{cor:cotype} is the precise conclusion
concerning the limitation of cotype information.
\end{remark}

\end{document}